\documentclass[10pt]{amsart}
\usepackage{enumerate}
\usepackage{amssymb,newlfont,amsmath, amsthm}
\usepackage{caption}
\usepackage{mathtools}
\usepackage{color,graphicx}
\usepackage[update,prepend]{epstopdf}
\usepackage[colorlinks]{hyperref}
\usepackage{enumitem}

\newcommand{\beq}{\begin{equation}}
\newcommand{\eeq}{\end{equation}}
\def\bals#1\eals{\begin{align*} #1 \end{align*}}
\def\bal#1\eal{\begin{align} #1 \end{align}}

\newcommand{\where}{\quad \text{ where } }

\newcommand\Dom\Omega

\newcommand\RR{\mathbb{R}}

\newcommand\Lap\Delta

\newcommand\abs[1]{\left\lvert #1 \right\rvert}

\def\bpde#1\epde{\[\left\{\begin{aligned}#1\end{aligned}\right. \]}
\def\inbpde#1\inepde{\left\{\begin{aligned}#1\end{aligned}\right.}
\def\binpde#1\einpde{\left\{\begin{aligned}#1\end{aligned}\right.}

\newcommand\prn[1]{\left( {#1} \right)}








\def\bbmat{\begin{bmatrix}[r]}
\def\ebmat{\end{bmatrix}}









\newcommand{\barr}{\begin{array}}
\newcommand{\ea}{\end{array}}
\newcommand{\bea}{\begin{eqnarray}}
\newcommand{\eea}{\end{eqnarray}}
\newcommand{\bt}{\begin{table}}
\newcommand{\et}{\end{table}}

\theoremstyle{plain}
\theoremstyle{definition}

\newtheorem{theorem}{Theorem}[section]
\newtheorem{lemma}[theorem]{Lemma}

\theoremstyle{definition}

\theoremstyle{remark}

\numberwithin{equation}{section}


\begin{document}
  
\title{An Elementary Proof that Symplectic Matrices Have Determinant One}

\author{Donsub Rim}
\address{Department of Applied Mathematics, University of Washington, Seattle WA 98195}
\email{drim@uw.edu}

\begin{abstract}
    We give one more proof of the fact that
    symplectic matrices over real and complex fields have determinant one.
    While this has already been proved many times,
    there has been lasting interest in finding an elementary proof 
    \cite{froilan, mackey1}.
    The result is restricted to the real and complex case due to 
    its reliance on field-dependent spectral theory, however in this setting 
    we obtain a proof which is more elementary in the sense that it is direct 
    and requires only well-known facts.
    Finally, an explicit formula for the determinant of conjugate
    symplectic matrices in terms of its square subblocks is given.
\end{abstract}

\maketitle

\section{Introduction}

A \emph{symplectic matrix} is a matrix $A \in \mathbb{K}^{2N \times 2N}$ over 
a field $\mathbb{K}$
that is defined by the property
\beq
    A^T J A = J \quad \text{ where } \quad
J := \begin{bmatrix} O & I_N \\ -I_N & O \end{bmatrix},
    \quad I_N := (\text{identity in } \mathbb{K}^{N \times N}.)
\label{eq:def}
\eeq
We are concerned with the problem of showing that $\det(A) = 1$.
In this paper we focus on the special case when $\mathbb{K} = \mathbb{R}$ 
or $\mathbb{C}$.

It is straightforward 
to show that such a matrix $A$ has $\det(A) = \pm 1$. 
However, there is an apparent lack of an entirely elementary proof 
verifying that $\det(A)$ is indeed equal to $+1$
\cite{froilan, mackey1}, although various proofs have appeared
in the past. Existing proofs
require polar decomposition \cite{meyer} or 
structured polar decomposition \cite{mackey2} in the least.
Symplectic $\mathbb{G}$-reflectors developed in \cite{mackey3}
can be exploited \cite{mackey1} to provide a proof.
A proof utilizing the relationships of linearly independent rows
and columns in the block structure of symplectic matrices, called \emph{the 
complementary bases theorem}, appears in \cite{froilan}.
It is also widely known that a proof results from a study of 
Pfaffians \cite{artin}.
We emphasize that some of these previous proofs, including the 
proof using the complementary bases theorem and that using Pfaffians,
yield the result over arbitrary fields and are therefore more general.
 
The main contribution of this paper is to show that, when restricted to 
real matrices, there is a direct proof that requires only elementary 
linear algebra. With more work, this proof extends to complex matrices, and the
extension only requires basic knowledge of the topology of general linear groups.
Moreover, for the conjugate symplectic matrices defined to satisfy $A^*JA = J$ 
rather than \eqref{eq:def}, the same approach yields an explicit formula 
\eqref{eq:conjformula} for 
the determinant in terms of its four square subblocks.

Although this is an independent result by the author, 
this precise approach for the real case 
had appeared previously in a reference written in Chinese 
by Xu \cite{xu}, to our surprise. 
Perhaps not very widely known, this is the first appearance
of the proof to the best of our knowledge.

\section{A proof that the determinant of a real or complex symplectic matrix 
is one}

Let us denote the set of $2N \times 2N$ 
symplectic matrices over field $\mathbb{K}$ by $Sp(2N,\mathbb{K})$. 

A feature of the proofs below is that they follow a similar overall strategy
for both cases, when $\mathbb{K} = \mathbb{R}$ and $\mathbb{K} = \mathbb{C}$.
Nonetheless, in detail 
there is nontrivial difference arising from field-dependence,
in the form of Lemma \ref{thelem}.

We first give the proof when $A$ is real.

\begin{theorem}
    Let $A \in Sp(2N,\RR)$. Then $\det(A) = 1$.
\end{theorem}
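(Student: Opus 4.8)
The plan is to compute $\det(A)$ as the product of the eigenvalues of $A$ over $\CC$ and to track the only factor whose sign is not automatically forced to be positive. First I would record the elementary observation that taking determinants in \eqref{eq:def} gives $\det(A)^2\det(J)=\det(J)$, hence $\det(A)=\pm 1$; the entire difficulty is the sign. To pin it down, I would use two symmetries of the characteristic polynomial $p(\lambda)=\det(A-\lambda I)$. Since $A$ is real, $p$ has real coefficients, so non-real eigenvalues occur in conjugate pairs $\{\lambda,\bar\lambda\}$ whose product $|\lambda|^2$ is positive. Since $A$ is symplectic, \eqref{eq:def} shows $A^{-1}=J^{-1}A^TJ$ is similar to $A^T$ and hence to $A$, so the eigenvalues are invariant under $\lambda\mapsto 1/\lambda$ with multiplicities preserved; thus every eigenvalue $\lambda\neq\pm 1$ pairs with $1/\lambda$ to contribute a factor $1$, and real negative eigenvalues $\neq-1$ pair with their (also negative) reciprocals to contribute a positive factor. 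The eigenvalue $+1$ contributes $1$ regardless of its multiplicity. Consequently $\det(A)=(-1)^{m}$, where $m$ is the algebraic multiplicity of the eigenvalue $-1$, and the theorem reduces to showing that $m$ is even.

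The core step, which I expect is the content of Lemma~\ref{thelem} and the place where field-dependence enters, is to prove that $m$ is even. Here I would work with the real generalized eigenspace $V=\Kern(A+I)^{2N}$, whose dimension is exactly $m$, together with the alternating bilinear form $\omega(x,y):=x^TJy$. The two facts I would use are that $A$ preserves $\omega$, i.e. $\omega(Ax,Ay)=\omega(x,y)$ by \eqref{eq:def}, equivalently $\omega(Ax,y)=\omega(x,A^{-1}y)$, so that $A^{-1}$ is the $\omega$-adjoint of $A$; and that $\RR^{2N}=V\oplus W$ as an $A$-invariant direct sum (Fitting's lemma), where $W=\rg(A+I)^{2N}$ and $A+I$ acts invertibly on $W$. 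The goal is to show $\omega(V,W)=0$: given $x\in V$ with $(A+I)^{k}x=0$ and $y\in W$, I would write $y=(A+I)^{k}y'$ with $y'\in W$ (possible since $(A+I)^{k}$ is invertible on $W$) and then move the operator across the form using the adjoint relation, $\omega(x,(A+I)^{k}y')=\omega((A^{-1}+I)^{k}x,y')=\omega(A^{-k}(A+I)^{k}x,y')=0$, using that $A^{-1}$ commutes with $A+I$.

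Once $\omega(V,W)=0$ is established, non-degeneracy of $\omega$ on all of $\RR^{2N}$ forces $\omega$ to restrict to a non-degenerate form on $V$: if $x\in V$ is $\omega$-orthogonal to $V$, it is also $\omega$-orthogonal to $W$, hence to $V\oplus W=\RR^{2N}$, so $x=0$. Since $J^T=-J$, this restriction is alternating, and a non-degenerate alternating form can exist only on an even-dimensional space; therefore $m=\dim V$ is even and $\det(A)=(-1)^m=1$.

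I expect the main obstacle to be the middle step: upgrading the eigenvector-level pairing relation $(1+\mu)\,\omega(x,y)=0$ (immediate for genuine eigenvectors) to the statement for generalized eigenvectors and for the full invariant complement $W$. The adjoint computation above is the clean way to carry this out, and it sidesteps any need to complexify $A$ or to invoke Jordan normal form; the only genuinely field-dependent inputs are the factorization of $p$ over $\CC$ used in the eigenvalue bookkeeping and the real decomposition $\RR^{2N}=V\oplus W$, which is exactly why the argument is confined to $\RR$ and $\CC$ rather than an arbitrary field.
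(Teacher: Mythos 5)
Your proof is correct, but it takes a genuinely different route from the paper. You argue spectrally: the spectrum of $A$ is closed under $\lambda\mapsto\bar\lambda$ (real coefficients) and under $\lambda\mapsto 1/\lambda$ (since \eqref{eq:def} makes $A$ similar to $A^{-T}$), so all eigenvalue classes contribute positive factors to $\det(A)$ except possibly $-1$, giving $\det(A)=(-1)^m$ with $m$ the algebraic multiplicity of $-1$; you then prove $m$ is even by showing, via the $\omega$-adjoint relation $\omega(x,Ay)=\omega(A^{-1}x,y)$ and the Fitting decomposition $\RR^{2N}=\Kern(A+I)^{2N}\oplus\rg(A+I)^{2N}$, that the symplectic form restricts non-degenerately to the generalized eigenspace of $-1$, which must therefore be even-dimensional. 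All the steps check out, including the key computation $\omega(x,(A+I)^k y')=\omega(A^{-k}(A+I)^k x,y')=0$. The paper instead avoids any spectral analysis of $A$ itself: it observes $\det(A^TA+I)>1$, factors $A^TA+I=A^T(A+JAJ^{-1})$, and uses a block-unitary similarity to show $\det(A+JAJ^{-1})=\abs{\det(C+iD)}^2\ge 0$, forcing $\det(A)>0$. (Your guess that Lemma~\ref{thelem} concerns evenness of the multiplicity of $-1$ is off; that lemma is the complex-field analogue of this block-determinant inequality.) The trade-off: the paper's argument is shorter and more elementary, needing only positive definiteness of $A^TA$ and a $2\times 2$ block identity, which is precisely its stated goal; your argument needs generalized eigenspaces, Fitting's lemma, and the even-dimensionality of non-degenerate alternating forms, but it is structurally more robust --- the reciprocal-pairing and form-restriction steps do not actually require complex conjugation, so your approach adapts to $Sp(2N,\CC)$ essentially unchanged (drop the conjugate pairing) and is close in spirit to the proofs that work over arbitrary fields, whereas the paper must prove a separate, harder lemma to handle the complex case.
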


\begin{proof}
Taking the determinant on both sides of $A^T J A = J$,
\begin{equation}
\det(A^T J A) = \det(A^T) \det(J) \det(A) = \det(J).\label{eq:main}
\end{equation}
So we immediately have that $\det(A) = \pm 1$.

Then let us consider the matrix 
\begin{equation}
A^TA + I.  \label{eq:matrix}
\end{equation}
Since $A^TA$ is symmetric positive definite,  
the eigenvalues of \eqref{eq:matrix} are real and greater than $1$.
Therefore its determinant, being the product of its eigenvalues,  
has $\det(A^TA +I) > 1$. 

Now as $\det(A) \ne 0$, $A$ is invertible. Using \eqref{eq:def} we may write
\begin{equation}
    A^TA + I = A^T( A + A^{-T}) = A^T(A + JAJ^{-1}).
\label{eq:factor}
\end{equation}
Denote the four $N \times N$ subblocks of $A$ as follows,
\begin{equation}
A = \begin{bmatrix} A_{11} & A_{12} \\ A_{21} & A_{22} \end{bmatrix},
    \where A_{11},A_{12},A_{21},A_{22} \in \RR^{N \times N}. \label{eq:blocks}
\end{equation}
Then we compute
\begin{align}
    A + JAJ^{-1} &= \begin{bmatrix} A_{11} & A_{12} \\ A_{21} & A_{22} \end{bmatrix}
+ \begin{bmatrix} O & I_N \\ -I_N & O \end{bmatrix}
   \begin{bmatrix} A_{11} & A_{12} \\ A_{21} & A_{22} \end{bmatrix}
\begin{bmatrix} O & - I_N \\  I_N & O \end{bmatrix} \\
  &= \begin{bmatrix} A_{11} & A_{12} \\ A_{21} & A_{22} \end{bmatrix}
+ \begin{bmatrix} A_{22} & -A_{21} \\ -A_{12} & A_{11} \end{bmatrix}
    = \begin{bmatrix} A_{11}+ A_{22} & A_{12} - A_{21} \\ - A_{12}+ A_{21}  & A_{11} + A_{22} \end{bmatrix}.
\end{align}
Writing the blocks as $C := A_{11} + A_{22}$ and $D:= A_{12} - A_{21}$, 
we make use of a unitary transform 
\begin{align}
A + JAJ^{-1} &= \begin{bmatrix} C & D \\ -D & C \end{bmatrix} \notag \\
             &=  
    \frac{1}{\sqrt{2}}\begin{bmatrix} I_N & I_N \\ iI_N & -iI_N \end{bmatrix}
             \begin{bmatrix} C + i D & O \\ O & C - i D \end{bmatrix}
    \frac{1}{\sqrt{2}} \begin{bmatrix} I_N & -iI_N \\ I_N & iI_N \end{bmatrix}.
\label{eq:unitary}
\end{align}
We plug this factorization into \eqref{eq:factor}.
Note that $C,D$ are both real. This allows the complex 
conjugation to commute with the determinant (as it is a polynomial of its 
entries)
\bals
 0 < 1 < \det(A^TA + I)  &= \det(A^T(A + JAJ^{-1})) \\
&= \det(A) \det(C + i D) \det(C - iD) \\
&= \det(A) \det(C + i D) \det\prn{\overline{C + iD}}\\
&= \det(A) \det(C + iD) \overline{\det(C + iD)} 
= \det(A) \abs{\det(C + iD)}^2.
\eals
Clearly, none of the two determinants on the RHS can be zero, 
so we may conclude
$\abs{\det(C + iD)}^2 > 0$. Dividing this through on both sides,
we have $\det(A) > 0$, and thus $\det(A) = 1$.
\end{proof}
The unitary transform \eqref{eq:unitary} appears in the final part
of a proof that the determinant of any real symplectic matrix is $+1$
using polar decomposition in \cite{meyer} although
there it is not used in conjunction with \eqref{eq:matrix}.
In our proof its use was in showing that
\begin{equation}
\det\left(\begin{bmatrix} C & D \\ -D & C \end{bmatrix}\right) \ge 0
\quad \text{ for any }
C, D \in \RR^{N \times N}. \label{ineq:real}
\end{equation}
The next lemma is a generalization of \eqref{ineq:real} to complex matrices,
and is the main hurdle in applying the above approach
to the complex field.
It arises naturally in the treatment of quaternionic matrices,
and was proved in \cite{aslaksen, zhang}. We simply reproduce
the latter proof here for completeness.
\begin{lemma} \label{thelem}
Let $C, D \in \mathbb{C}^{N \times N}$. Then
$\det\left(\begin{bmatrix} C & D \\ - \overline{D} & \overline{C} \end{bmatrix} \right) \ge 0. $
\end{lemma}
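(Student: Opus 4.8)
The plan is to avoid computing the determinant directly and instead exploit the internal symmetry of the block matrix
\[
M := \begin{bmatrix} C & D \\ -\overline{D} & \overline{C} \end{bmatrix}.
\]
Let $J$ be the matrix from \eqref{eq:def} and define the conjugate-linear map $\phi$ on $\mathbb{C}^{2N}$ by $\phi(v) = J\overline{v}$. First I would verify by a single block multiplication that $MJ = J\overline{M}$, which is equivalent to the commutation relation $\phi M = M\phi$; this is the only computation in the argument and is routine. Since $J$ is real and satisfies $J^2 = -I_{2N}$, the map $\phi$ is conjugate-linear with $\phi^2 = -I_{2N}$.

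Next I would read off the eigenvalue structure of $M$ from this symmetry. If $Mv = \lambda v$ with $v \ne 0$, then
\[
M\phi(v) = \phi(Mv) = \phi(\lambda v) = \overline{\lambda}\,\phi(v),
\]
and $\phi(v) \ne 0$ because $\phi$ is invertible; hence $\overline{\lambda}$ is again an eigenvalue. Passing to the generalized eigenspaces $V_\lambda = \Kern (M - \lambda I_{2N})^{2N}$, the relation $\phi M = M\phi$ shows (via $\phi(M-\lambda I_{2N}) = (M-\overline{\lambda} I_{2N})\phi$) that $\phi$ restricts to a conjugate-linear isomorphism $V_\lambda \to V_{\overline{\lambda}}$, so these spaces have equal complex dimension. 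For the non-real eigenvalues this pairs them as $\{\lambda,\overline{\lambda}\}$ with equal multiplicities, each pair contributing a factor $\lambda\overline{\lambda} = \abs{\lambda}^2 \ge 0$ to $\det(M)$.

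The main obstacle, and the only genuinely structural point, is the real eigenvalues, for which the pairing above is vacuous. When $\lambda$ is real, $\phi$ preserves $V_\lambda$ and restricts there to a conjugate-linear map with $\phi^2 = -I$, that is, a quaternionic structure; I would show this forces $\dim_{\mathbb{C}} V_\lambda$ to be even. The proof is the standard induction: for any $0 \ne w \in V_\lambda$ the vectors $w$ and $\phi(w)$ are linearly independent over $\mathbb{C}$, since $\phi(w) = cw$ would give $-w = \phi^2(w) = \abs{c}^2 w$, which is impossible; they span a two-dimensional $\phi$-invariant subspace, and passing to the quotient lowers the dimension by two. Hence every real eigenvalue occurs with even algebraic multiplicity.

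Collecting the contributions, $\det(M)$ is a product over conjugate pairs of non-real eigenvalues of factors $\abs{\lambda}^2 \ge 0$, times a product over the real eigenvalues of $\lambda^{m_\lambda}$ with each $m_\lambda$ even and therefore a perfect square $\ge 0$. I would conclude that $\det(M) \ge 0$. This argument relies only on $\mathbb{C}$ being algebraically closed, so that the eigenvalues exist, together with elementary linear algebra, which is in keeping with the spirit of the rest of the paper.
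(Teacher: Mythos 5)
Your proof is correct, but it is a genuinely different argument from the one in the paper. The paper's proof reduces to the case where $C$ is invertible, factors out $\begin{bmatrix} C^{-1} & 0 \\ 0 & \overline{C}^{-1}\end{bmatrix}$ to reach $\begin{bmatrix} I & E \\ -\overline{E} & I \end{bmatrix}$ with $E = C^{-1}D$, evaluates that determinant as $\det(I + E\overline{E})$ via the commuting-blocks formula, and then \emph{cites} Horn--Johnson for the key spectral fact (eigenvalues of $\overline{E}E$ occur in conjugate pairs and negative real ones have even algebraic multiplicity); finally it disposes of singular $C$ "by continuity." You instead prove the spectral structure directly for the full matrix $M$: the conjugate-linear map $\phi(v) = J\overline{v}$ satisfies $\phi^2 = -I$ and commutes with $M$ (via $MJ = J\overline{M}$), which pairs non-real eigenvalues $\lambda \leftrightarrow \overline{\lambda}$ with equal multiplicities of generalized eigenspaces and forces even multiplicity of real eigenvalues by the standard quaternionic-structure induction, so $\det(M)$ is a product of factors $\abs{\lambda}^{2m_\lambda}$ and even powers of real numbers. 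What your route buys: it is entirely self-contained (no appeal to the Horn--Johnson result, whose proof is essentially the same quaternionic argument you give), it needs no case split on invertibility of $C$, and it eliminates the density/continuity step, which the paper leaves terse and which requires justifying that perturbations preserving the block structure make $C$ invertible. What the paper's route buys: brevity on the page, and only finite-dimensional determinant identities rather than the generalized eigenspace (primary) decomposition your argument invokes. Your proof also makes explicit the quaternionic origin of the result, which the paper only alludes to when introducing the lemma.
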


\begin{proof}
    Let us first assume that $C$ is invertible. 

    Then letting $E:=C^{-1}D$, $\det\prn{\overline{E}E + I} \ge 0$
    since eigenvalues of $\overline{E}E$ occur in conjugate
    pairs and every negative eigenvalue of $\overline{E}E$ has even
    algebraic multiplicity \cite{HJ}.
    Note that $E$ and $I$ commute,
    which implies that
    \[
    \det \prn{ \begin{bmatrix} I & E \\ -\overline{E} & I \end{bmatrix} } \ge 0.
    \]
    Then we have,
    \bals
\begin{bmatrix} C^{-1} & 0 \\ 0 & \prn{\overline{C}}^{-1} \end{bmatrix}
\begin{bmatrix} C & D \\ - \overline{D} & \overline{C} \end{bmatrix}
    =
\begin{bmatrix} I & C^{-1} D \\ - \overline{C^{-1}D} & I \end{bmatrix}
    =
\begin{bmatrix} I & E \\ - \overline{E} & I \end{bmatrix}.
    \eals
    Taking the determinant on both sides, we have the result.
    The Lemma follows by continuity.
\end{proof}

Now we prove the complex case. Given Lemma \ref{thelem} the proof is
very similar to the real case.

\begin{theorem}
Let $A \in Sp(2N,\mathbb{C})$. Then $\det(A) = 1$.
\end{theorem}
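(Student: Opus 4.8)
The plan is to follow the real-case strategy verbatim, changing only what the field $\CC$ forces us to change. First I would replace the symmetric matrix $A^TA + I$ of \eqref{eq:matrix} by the Hermitian matrix $A^*A + I$. Since $A$ is invertible (from $\det(A)\neq 0$), $A^*A$ is Hermitian positive definite, so its eigenvalues are real and positive and the eigenvalues of $A^*A + I$ are real and greater than $1$; hence $\det(A^*A + I)$ is a real number exceeding $1$. Exactly as in the real case, taking determinants in $A^TJA = J$ gives $\det(A)^2 = 1$, so $\det(A) = \pm 1$ is real even over $\CC$. In particular $\det(A^*) = \overline{\det(A)} = \det(A)$.

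Next I would produce the analogue of the factorization \eqref{eq:factor}. The symplectic identity $A^TJA = J$ gives $A^{-1} = J^{-1}A^TJ$, and taking conjugate transposes, using that $J$ is real with $J^T = -J$, yields $A^{-*} = J\overline{A}J^{-1}$. Therefore
\[
A^*A + I = A^*\prn{A + A^{-*}} = A^*\prn{A + J\overline{A}J^{-1}}.
\]
Writing $A$ in the $N\times N$ blocks of \eqref{eq:blocks}, now with complex entries, and computing $J\overline{A}J^{-1}$ just as in the real case, I expect
\[
A + J\overline{A}J^{-1} = \begin{bmatrix} C & D \\ -\overline{D} & \overline{C}\end{bmatrix},
\qquad C := A_{11} + \overline{A_{22}}, \quad D := A_{12} - \overline{A_{21}}.
\]
The essential point is that passing to the conjugate transpose forces $\overline{A}$ into the factorization, so the second factor has precisely the conjugated block structure appearing in Lemma \ref{thelem}, rather than the simpler form in \eqref{ineq:real}.

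Finally I would invoke Lemma \ref{thelem} to conclude $\det\left(\begin{bmatrix} C & D \\ -\overline{D} & \overline{C}\end{bmatrix}\right) \ge 0$. Taking determinants in the factorization and using $\det(A^*) = \det(A)$ gives
\[
1 < \det(A^*A + I) = \det(A)\,\det\left(\begin{bmatrix} C & D \\ -\overline{D} & \overline{C}\end{bmatrix}\right).
\]
Since the left-hand side is positive and the second factor on the right is nonnegative, neither factor can vanish, so both are strictly positive; in particular $\det(A) > 0$. Combined with $\det(A) = \pm 1$, this forces $\det(A) = 1$.

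The genuine obstacle is not in this argument but is isolated entirely in Lemma \ref{thelem}, which I am free to assume. Over $\CC$ the unitary diagonalization \eqref{eq:unitary} no longer delivers nonnegativity: the relevant block $\begin{bmatrix} C & D \\ -\overline{D} & \overline{C}\end{bmatrix}$ is not of the form $\begin{bmatrix} C & D \\ -D & C\end{bmatrix}$, and even for the latter the reduction $\det = \abs{\det(C+iD)}^2$ relied on $C,D$ being real so that $C - iD = \overline{C+iD}$. Establishing the sign of $\det\begin{bmatrix} C & D \\ -\overline{D} & \overline{C}\end{bmatrix}$ instead requires the conjugate-pairing of the eigenvalues of $\overline{E}E$ used in the proof of Lemma \ref{thelem}. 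This is exactly the field-dependence flagged before the lemma, and it is the only step where the complex case genuinely departs from the real one.
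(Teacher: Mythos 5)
Your proposal is correct and follows essentially the same route as the paper: take determinants of $A^TJA=J$ to get $\det(A)=\pm 1$, factor $A^*A+I = A^*\prn{A+A^{-*}}$ with $A^{-*}=J\overline{A}J^{-1}$, identify the second factor as $\begin{bmatrix} C & D \\ -\overline{D} & \overline{C}\end{bmatrix}$ with $C = A_{11}+\overline{A_{22}}$, $D = A_{12}-\overline{A_{21}}$, and invoke Lemma \ref{thelem} to force $\det(A)>0$. Your identification of the conjugated block structure, and of Lemma \ref{thelem} as the sole point of genuine field-dependence, matches the paper exactly.
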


\begin{proof}
If $A$ is symplectic then
we again have from \eqref{eq:main} that $\det(A) = \pm 1$.
Consider the matrix
\begin{equation}
A^* A + I \label{eq:cspd}.
\end{equation}
The matrix $A^*A$ is Hermitian positive definite and therefore
the matrix \eqref{eq:cspd} has eigenvalues greater than one,
and thus $\det(A^*A + I) > 1$. Then we factor,
\begin{equation}
A^*A + I = A^*(A + A^{-*}) = A^* (A + \overline{A^{-T}}).
\label{eq:cdecom}
\end{equation}
Then we again use the definition \eqref{eq:def} and
with the notation \eqref{eq:blocks} for subblocks,
\bals
A + \overline{JAJ^{-1}} &= 
\begin{bmatrix}
A_{11} & A_{12} \\ A_{21} & A_{22} 
\end{bmatrix}
+ 
\overline{ \begin{bmatrix} A_{22} & -A_{21} \\ -A_{12} & A_{11} \end{bmatrix}}  \\
&= \left[
\begin{array}{rr} A_{11} + \overline{A_{22}} &  A_{12}- \overline{A_{21}} \\ - \overline{A_{12}} +A_{21}& \overline{A_{11}} + A_{22}
\end{array}\right]
= \begin{bmatrix} C & D \\ -\overline{D} & \overline{C} \end{bmatrix},
\eals
if we let $C := A_{11} + \overline{A_{22}}$ and $D:= A_{12} - \overline{A_{21}}$.
Therefore we have from \eqref{eq:cdecom} 
\bals
0 < 1 &< \det(A^*(A + A^{-*})) = 
\overline{\det(A)} 
\det\left( 
\begin{bmatrix} C & D \\ - \overline{D} & \overline{C} \end{bmatrix} \right)
=
\det(A) 
\det\left( 
\begin{bmatrix} C & D \\ - \overline{D} & \overline{C} \end{bmatrix} \right).
\eals
By Lemma \ref{thelem} we necessarily have that  
$\det\left( 
\begin{bmatrix} C & D \\ - \overline{D} & \overline{C} \end{bmatrix} \right)>0$,
so we divide and conclude $\det(A) > 0$, showing that $\det(A) = 1$.
\end{proof}

\section{An explicit formula for determinant of conjugate symplectic matrices}

Conjugate symplectic matrices $A \in \mathbb{C}^{2N \times 2N}$ 
are defined similarly to symplectic matrices, by replacing the transpose
in \eqref{eq:def} with the conjugate transpose,
\[
A^*JA = J.
\]
Again, it immediately follows that $\abs{\det(A)} = 1$,
however $\det(A)$ may take on any complex value on the unit circle
\cite{mackey1}.
Turning our attention to \eqref{eq:cspd} once more,
\[
A^*A +I = A^*(A + A^{-*}) = A^*(A + J AJ^{-1}).
\]

With the previous notation for the $N \times N$ subblocks of $A$ and 
with $C,D$ defined as 
\[
A = \begin{bmatrix} A_{11} & A_{12} \\ A_{21} & A_{22} \end{bmatrix},
    \quad
    C := A_{11} + A_{22},
    \quad
    D := A_{12} - A_{21},
\]
we obtain \eqref{eq:unitary} but for a complex matrix.
Then it follows that
\begin{align*}
1 < \overline{\det(A)} \det\left( 
\left[\begin{array}{rr} C & D \\ -D & C \end{array}\right] \right)
&= \overline{\det(A)} \det(C + iD) \det(C - iD) \\
&= \det(A)\overline{\det(C^2 + D^2 - i[C,D])} ,
\end{align*}
where $[C,D] := CD - DC$ is the commutator.

This determines the phase and therefore $\det(A)$, yielding the formula
\begin{equation}
    \det(A) = \frac{\det((A_{11} + A_{22})^2 + (A_{12} - A_{21})^2 - i[A_{11} + A_{22},A_{12} - A_{21}])}{\abs{\det((A_{11} + A_{22})^2 + (A_{12} - A_{21})^2 - i[A_{11} + A_{22},A_{12} - A_{21}])}}.
\label{eq:conjformula}
\end{equation}

\section{Acknowledgements}
The author would like to express thanks to 
Prof. Froil\'an M. Dopico and Prof. Ioana Dumitriu for carefully reading
this paper and providing valuable comments, 
and thanks Prof. Bernard Deconinck for 
introducing him to this problem. 

He also thanks Dr. Meiyue Shao for bringing to attention the
reference \cite{xu}.

\bibliographystyle{amsplain}

\end{document}